\documentclass[12pt]{article}

\usepackage{amsfonts}
\usepackage{amsmath}
\usepackage{amsmath}
\usepackage{amssymb}
\usepackage{amsthm}
\usepackage{mathrsfs}
\usepackage{graphicx}
\usepackage[all]{xy}
\usepackage{leftidx}
\usepackage{url}
\usepackage{xcolor}
\usepackage{arydshln}
\usepackage{tikz}



\newtheorem{theorem}{Theorem}[section]
 \newtheorem{proposition}[theorem]{Proposition}
 
 \newtheorem{corollary}[theorem]{Corollary}
 
 \theoremstyle{definition}
 \newtheorem{definition}[theorem]{Definition}
 \newtheorem{example}[theorem]{Example}
 \theoremstyle{remark}
 \newtheorem{remark}[theorem]{Remark}
 \numberwithin{equation}{subsection}



\newcommand{\fp}{{\mathfrak{p}}}

\newcommand{\fg}{{\mathfrak{g}}}



\newcommand{\QQ}{{\mathbb{Q}}}

\newcommand{\FF}{{\mathbb{F}}}
\newcommand{\RR}{{\mathbb{R}}}
\newcommand{\CC}{{\mathbb{C}}}

\newcommand{\GG}{{\mathbb{G}}}
\newcommand{\TT}{{\mathbb{T}}}




\newcommand{\Sp}{{\operatorname{Sp}}}
\newcommand{\symp}{{\operatorname{\mathfrak{sp}}}}

\newcommand{\Gal}[1]{{\operatorname{Gal}(#1)}}




\newcommand{\abs}[1]{{\vert #1 \vert}}

\newcommand{\cl}[1]{{\overline{#1}}}  
\newcommand{\lsup}[2]{{\vphantom{#2}}^{#1}{#2}} 
\newcommand{\oh}[1]{{\lsup{\circ\!}{#1}}}


\newcommand{\ceq}{{\, :=\, }}
\newcommand{\iso}{{\, \cong\, }}

\newcommand{\dia}{{\operatorname{diag}}}
\newcommand{\Nil}{{\operatorname{Nil}}}
\newcommand{\Dst}{{\mathcal{D}^{\operatorname{st}}}}
\newcommand{\Dnil}{{\mathcal{D}_{\operatorname{nil}}}}
\newcommand{\Dent}{{\mathcal{D}_{\operatorname{ent}}}}
\newcommand{\fgnil}{{\fg_{\operatorname{nil}}}}
\newcommand{\fgent}{{\fg_{\operatorname{ent}}}}

\newcommand{\fac}{{\mathcal{F}}}

\title{Fourier Eigenspaces of Waldspurger's Basis}
\author{Aaron Christie}
\date{17 November 2014}
\begin{document}
\maketitle

\abstract{
In this paper we investigate invariant distributions on
$p$-adic $\symp_{2n}$ defined by Waldspurger in \cite{waldtome} and
find the Fourier eigenspaces in their span. We prove that there is a single eigenvalue if
$n$ can be represented as a sum of triangular numbers or is triangular
itself and that none exist otherwise. We determine that the dimension
of this lone eigenspace is equal to the
number of noncommuting representations of $n$ as the sum of at most
two triangular numbers. Each such representation corresponds to what
we call a \emph{Lusztig distribution}. These distributions belong to
the generating set defined by Waldspurger and
form a basis for the eigenspace. Finally, we show that the eigenspace
contains a 1-dimensional subspace consisting of stable distributions
when $n=2\Delta$, $\Delta$ a triangular
number, but otherwise consists of distributions that are not stable.
}


\section{Introduction}

In this paper we study the eigenspaces of the Fourier transform in the
span of certain invariant distributions on symplectic Lie algebras
over a $p$-adic field that appear, denoted by $\phi_\theta(X_T,-)$, in
the book~\cite{waldtome} by Waldspurger as
precursors to the various bases he produces there for the purpose
of studying stability and endoscopy. One of those bases is obtained
from the Fourier transforms of what are called generalized Green functions, which
motivates the question of finding the Fourier eigendistributions in
the span of Waldspurger's distributions. 

Let $\GG$ be a connected reductive group, $\boldsymbol{\fg}$ be its
Lie algebra, both defined over a
$p$-adic field $F$, and let $G = \GG(F)$ and
$\fg=\boldsymbol{\fg}(F)$\footnote{In general, boldface letters denote
  algebro-geometric objects and their unbolded equivalents set-theoretic ones.}. In~\cite{waldtome},
Waldspurger finds a basis for the set $\Dst\cap\Dnil$ of stable invariant distributions supported on
the nilpotent set $\fg_{\operatorname{nil}}$ in $\fg$.  

The set
$\mathcal{D}$ of invariant distributions on $\fg$ consists of elements
of the dual of the convolution algebra $\mathcal{C}^\infty_c(\fg )$ of
locally constant, compactly supported, complex-valued functions on
$\fg$ that are invariant under the action induced by the adjoint
action of $G$ on $\fg$. The set of distributions supported on a
particular subset $\omega$ is denoted $\mathcal{D}(\omega )$, except
in certain cases:
when $\omega$ is $\fgnil$ or $\fgent$ we adopt Waldspurger's notation and
write $\Dnil$ for $\mathcal{D}(\fgnil )$ and $\Dent$ for
$\mathcal{D}(\fgent )$. Harish-Chandra has proved (see \cite{HC}) 
that a basis for $\Dnil$ is given by the nilpotent orbital integrals. Stable distributions are harder to define explicitly,
but $\Dst$ can be briefly described as the closure of the span
of the stable regular semisimple orbital integrals, which have a
straightforward, if lenghty, description\footnote{For which see, \emph{e.g.}, \cite{debackerkazhdan}.}. Aside from the regular
semisimple orbital integrals, determining the stability of a given
distribution (in particular those with nilpotent support) is a
difficult task. This being the case
Waldspurger made use of a bridge from distributions with nilpotent
support and those with support on the ``integral elements'' of $\fg$,
denoted $\fgent$, where more can be said about stability.

The bridge takes the form of a homogeneity result proved earlier by Waldspurger (in~\cite{wald1}),
which states that
\[
\operatorname{res}_{\mathcal{H}}\Dent
=
\operatorname{res}_{\mathcal{H}}\Dnil.
\]
Here, the restriction is to the set 
\[
\mathcal{H} = \displaystyle \sum_{C}\mathcal{C}_c(\fg/\fg_C),
\]
where $\mathcal{C}_c(\fg/\fg_C)$ is the subset of
$\mathcal{C}_c^\infty(\fg)$ consisting of functions invariant under
translation by elements of the parahoric subalgebra $\fg_C$, and the
sum is over all alcoves of the Bruhat-Tits building $\mathcal{B}(\GG
)$ of $\GG$. This allowed him to find his basis in terms of distributions that are
supported on $\fg_{\operatorname{ent}}$, which is composed of the elements of
$\fg$ with integral eigenvalues; alternately,
\[
\fg_{\operatorname{ent}} = \displaystyle \bigcup_{x\in\mathcal{B}(\GG
  )} \fg_{x,0}.
\]

The distributions Waldspurger defines and then restricts to
$\mathcal{H}$ are associated to generalized Green
functions, which are $G$-conjugation invariant functions on
Lie groups defined over finite fields supported on the unipotent elements. Transported via the exponential
map (whose existence, it should be noted, restricts the characteristic
of the finite field to the set of good primes for $G$), they are also defined for Lie algebras and supported on the
nilpotent elements. They can be divided into two
basic types, the majority arising as Deligne-Lusztig characters, the rest
as characteristic functions of cuspidal unipotent character sheaves as
defined by Lusztig (in~\cite{CShI}). The latter kind are called ``Lusztig functions'' by Waldspurger,
and were originally distinguished by their special properties
\emph{vis \`{a}
vis} the Fourier transform: they are, up to scalar multiplicaton, the
only nilpotently supported functions on the Lie algebra whose Fourier
transform is also supported on $\fg_{\operatorname{nil}}$. We show
that these functions are tied to the Fourier eigenfunctions in
Waldspurger's proto-basis, though the correspondence is not one-to-one.

In this paper, we deal solely with the case where $\GG = \operatorname{Sp}_{2n}$ and
$\boldsymbol{\fg} = \symp_{2n}$ over a $p$-adic field $F$. We briefly
recall the definition of Lusztig functions along with some facts about
the parametrization of nilpotent orbits needed to support the definition.
We go on to consider the inflation of generalized Green functions to
$\fg$ and show that among these functions the Fourier
eigenfunctions are inflations of products of Lusztig functions, which
leads us to define and enumerate the class of $p$-adic Lusztig
functions. These functions do not correspond bijectively with the set
of Lusztig functions---there are always more $p$-adic Lusztig
functions than there are Lusztig functions. In fact, $p$-adic Lusztig
functions exist on $p$-adic $\symp_{2n}$ over a $p$-adic field $F$
when no such functions exist on $\symp_{2n}$
over the residue field of $F$.

Finally, we recall the definition of Waldspurger's distributions,
$\phi_\theta(X_T,-)$, and
investigate the Fourier eigenspaces in their span. Our main result is
the following:
\begin{quote}
{\bf Theorem~\ref{thm:eigendistributions}:} Let $\fg = \symp_{2n}(F)$. Then
\begin{enumerate}
\item $\widehat{\phi}_\theta(X_T,-) \in \operatorname{span}\{\phi_\theta(X_T,-)\}_{(\theta ,X_T)}$ if and only if $T$
is trivial (\emph{i.e.}, $X_T=0$).
\item If $n$ be a triangular number or the sum of two triangular
numbers, $\operatorname{span}\{\phi_\theta(X_T,-)\}_{(\theta ,X_T)}$
contains a single Fourier eigenspace $E$ with eigenvalue 
\[
\left(\operatorname{sgn}(-1) \displaystyle  q^{-\frac{1}{2}}\sum_{x\in\FF_q}
\operatorname{sgn}(x)\psi (x)\right)^{n}.
\]
Moreover, up to
$G$-conjugacy,
\[
\operatorname{dim}(E) = \begin{cases} 2(d_1(8n+2) - d_3(8n+2)) & \textrm{$n$ not
triangular,} \\ 2(d_1(8n+2)-d_3(8n+2)+1) & \textrm{$n$
triangular},\end{cases} 
\]
where $d_i(m)$ denotes the number of divisors of $m$ congruent to $i$ mod 4.
\end{enumerate} 
\end{quote}
We additionally show that the eigenspace $E$ contains a
1-dimensional subspace consisting of stable distributions precisely
when $n$ is twice a triangular number. To conclude, we describe a
conjectural geometrization of the eigendistributions in Theorem
\ref{thm:eigendistributions}. 
\vskip.5cm

{\bf Acknowledgements.} The author is pleased to thank Dr. Clifton
Cunningham and Dr. Monica Nevins,
with both of whom the author has had many useful conversations who
have been extremely patient and helpful readers of early drafts.

\section{Preliminaries}\label{sec:prelims}

\subsection{Notation}

Thoughout, $F$ is a non-archimedean local field with ring of integers
$\mathcal{O}_F$, uniformizer $\varpi$, and finite residue field
$\FF_q$ of characteristic $p>0$. We will take the valuation on
$F$ to be normalized so that $v_F(\varpi )=1$. Certain results, including the
parametrization of nilpotent orbits, the existence of a Killing form
and of a $G$-equivariant isomorphism between the unipotent subvariety of
the Lie group and the nilpotent subvariety of the Lie algebra require that $p$ not be too
small. In particular, the latter two require that $p$ be a good prime
(for which see \cite{springerunipotent} and \cite{springersteinberg}).

Even though some results (particularly background ones) hold more
generally, we will only consider the linear algebraic group
$\mathbb{G} = \operatorname{Sp}_{2n}$ over $F$ and its Lie algebra $\boldsymbol{\fg} =
\operatorname{\mathfrak{sp}}_{2n}$, unless declared
otherwise. Let $G = \GG (F)$ and
$\fg = \boldsymbol{\fg}(F)$. If we wish to consider $\GG$, $G$, 
$\boldsymbol{\fg}$, or $\fg$ over alternate fields, like an
algebraic closure $\cl{F}$ of $F$ or the residue field $\FF_q$, that field will be included as a
subscript: $\GG(\cl{F}) = G_{\cl{F}}$, $\boldsymbol{\fg}(\cl{F}) = \fg_\cl{F}$, and so on. 

Although for the most part it is unnecessary, there will be some instances
where it will be useful to choose a representation for $G$ or
$\fg$. When we do, $(V, q_V)$ will be a $2n$-dimensional vector space
$V$ over $F$ with symplectic form $q_V$ given by $q_V( x,y) = x^tJy$, where $J = \begin{bmatrix}0
  &I_n\\-I_n&0\end{bmatrix}$. Therefore, after embedding in
$\operatorname{GL}_{2n}(F)$, $G(V) = G = \{g\in\operatorname{GL}_{2n}(F)\mid
g^tJg=J\}$ and $\fg(V) = \fg =
\{h\in\operatorname{\mathfrak{gl}}_{2n}(F)\mid h^tJ - Jh = 0\}$.

\subsection{Nilpotent Orbits}\label{sec:nilpotentorbits}

This section contains a brief recollection of the parametrization of
nilpotent orbits in semisimple Lie algebras, which plays a role in the
definition of Lusztig functions. All of the material here
is modelled on the paper by Nevins,~\cite{nevins}.

First, recall that as long as $F$ has characteristic 0 or $p>3(h-1)$,
$h$ the Coxeter number of $G$ ($h = 2n$ in the case considered here), as a consequence of Jacobson-Morozov
theory (see, \emph{e.g.},~\cite{collmcg}), the nilpotent
$\GG$-orbits of a finite semisimple Lie algebra $\boldsymbol{\fg}$ over an
algebraically closed field (which henceforward will be called
\emph{geometric orbits}) can be parametrized by appropriate partitions
of $\dim V=2n$. A \emph{partition} of a number $c$ is a non-increasing
sequence $\lambda = (\lambda_1, \lambda_2, \dots ,\lambda_d)$ of
positive integers such that $\lambda_1+ \cdots +\lambda_d =c$. For any $1\leq
i\leq n$, let $m_i = \abs{\{\lambda_j \in \lambda \mid \lambda_j =
  i\}}$ be the \emph{multiplicity} of $i$. In the case of $\fg = \symp_{2n}(\cl{F})$,
the admissible partitions are those where $m_i$ is even for every odd
$i$. In the correspondence, the parts of a partition $\lambda$ give the sizes of the
blocks in the Jordan normal form of the elements in the corresponding
nilpotent orbit variety, which will be denoted $\mathcal{O}_\lambda$.

Since every such orbit variety contains an $F$-rational point we can
consider the set of $F$-rational $G_F$-orbits within
$\mathcal{O}_\lambda (F)$. For $\fg =
\operatorname{\mathfrak{sp}}_{2n}(F)$, these orbits
are parametrized by isometry classes of quadratic forms, as in the
following proposition.

\begin{proposition}[\cite{nevins}, Prop. 5.1]
Let $\lambda$ be a partition of $2n$ and let $m_j$ be the multiplicity
of $j$ in $\lambda$. Assume that $m_j$ is even whenever $j$ is
odd. The $G_F$-orbits in $\mathcal{O}_\lambda(F)$ are parametrized by
$n$-tuples
\[
\cl{Q} = (Q_2,Q_4,\dots ,Q_{2n}),
\]
where $Q_j$ represents the isometry class of a nondegenerate quadratic
form over $F$ of dimension $m_j$ ($Q_j = 0$ if $m_j =0$).
\end{proposition}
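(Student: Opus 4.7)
The plan is to use the Jacobson--Morozov machinery, available under the stated hypotheses on $p$, to reduce the classification of $G_F$-orbits within $\mathcal{O}_\lambda(F)$ to a classification of nondegenerate bilinear forms attached to the Jordan block structure. First, for any $X \in \mathcal{O}_\lambda(F)$ I would attach to $X$ an $\mathfrak{sl}_2(F)$-triple $(X, H, Y)$; the existence of such an $F$-rational triple is precisely where the assumption on $p$ (and the fact that $X$ is $F$-rational) is used. The representation of this $\mathfrak{sl}_2$ on $V$ decomposes into isotypic components
\[
V = \bigoplus_{j \geq 1} V_j,
\]
where $V_j$ is the sum of the irreducible subrepresentations of dimension $j$, and $\dim V_j = j m_j$.

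Next I would pass from $V_j$ to an $m_j$-dimensional space of ``highest weight vectors'' $W_j := \ker(X|_{V_j}) \cap V_j^{H=j-1}$ and show that the symplectic form $q_V$ induces a canonical nondegenerate bilinear pairing $B_j$ on $W_j$ given (up to sign) by $B_j(v, w) = q_V(v, Y^{j-1} w)$. A direct $\mathfrak{sl}_2$-calculation shows $B_j$ is alternating when $j$ is odd and symmetric when $j$ is even. For odd $j$, the hypothesis that $m_j$ is even makes this consistent, and a symplectic form of given even dimension over $F$ has a unique isometry class, so nothing needs to be recorded. For even $j$, $B_j$ gives a well-defined isometry class $Q_j$ of a nondegenerate quadratic form in dimension $m_j$. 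This produces the assignment $X \mapsto \cl{Q} = (Q_2, Q_4, \dots, Q_{2n})$.

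Finally one must check that (i) $\cl{Q}$ is independent of the chosen $\mathfrak{sl}_2$-triple through $X$, (ii) $\cl{Q}$ depends only on the $G_F$-orbit of $X$, and (iii) the map $X \mapsto \cl{Q}$ descends to a bijection onto the set of such tuples. For (i), the Kostant--Rallis type statement that any two $\mathfrak{sl}_2(F)$-triples through $X$ are conjugate by an element of the centralizer of $X$ in $G_F$ makes the forms canonical. For (ii), equivariance is immediate since conjugation by $g \in G_F$ moves the triple and takes $W_j$ to $W_j'$ via an isometry between $(W_j, B_j)$ and $(W_j', B_j')$. For (iii), given a tuple $\cl{Q}$, one constructs an explicit model $X_{\cl{Q}}$ by taking an orthogonal sum of the standard $\mathfrak{sl}_2$-representations of dimensions $j$ with multiplicity $m_j$, equipped with the tautological form built from $Q_j$ (even $j$) or the standard symplectic form (odd $j$); injectivity then reduces to a Witt extension argument: two elements $X, X'$ in $\mathcal{O}_\lambda(F)$ giving the same tuple yield isometric ambient symplectic spaces decomposed compatibly, and Witt's theorem lets one promote the piecewise isometry between the isotypic decompositions to a global element of $G_F$ conjugating $X$ to $X'$.

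The main obstacle I expect is step (iii), specifically the surjectivity and the Witt extension argument: one must verify that the orthogonal sum construction genuinely recovers the symplectic space $(V, q_V)$ up to isometry regardless of the tuple $\cl{Q}$, and that the ambiguities of choosing representatives in each $V_j$ are absorbed by conjugation in $G_F$. This is exactly where the fact that $\Sp_{2n}$ has a single isometry class in each dimension (in contrast with orthogonal groups) is essential, so the global symplectic structure imposes no further constraints on the tuple $(Q_2, Q_4, \dots, Q_{2n})$ beyond $\dim Q_j = m_j$.
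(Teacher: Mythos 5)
The paper does not prove this proposition; it cites it directly from Nevins (\cite{nevins}, Prop.~5.1). Your sketch is essentially the argument Nevins (following Springer--Steinberg) gives there: pass to a rational $\mathfrak{sl}_2$-triple, decompose $V$ into isotypic pieces, read off the induced forms $B_j(v,w)=q_V(v,Y^{j-1}w)$ on the multiplicity spaces (correctly alternating for $j$ odd, symmetric for $j$ even), discard the odd-$j$ data since symplectic forms over a field are determined by dimension, and use Kostant conjugacy of triples plus Witt extension for well-definedness and bijectivity. The one step you flag as the ``main obstacle'' (surjectivity and the Witt argument in (iii)) is handled cleanly by noting, as you do, that $V_j \cong W_j \otimes U_j$ with $U_j$ the irreducible $j$-dimensional $\mathfrak{sl}_2$-module carrying its canonical form, so an isometry on each $W_j$ tensors up to an isometry of $(V,q_V)$ intertwining the triples; and the ambient symplectic space imposes no constraint because it is unique up to isometry in its dimension. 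So your approach is correct and matches the cited source.
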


By the Witt Decomposition Theorem, every
nondegenerate quadratic form can be decomposed as a direct sum
\[
Q = q_0^m \oplus Q_{aniso},
\]
where $m\leq\frac{1}{2}\dim{Q}$, $q_0^m$ is the $m$-fold direct sum of
the hyperbolic plane, which has matrix representative
\[
q_0 = \begin{bmatrix}0&1\\1&0\end{bmatrix},
\]
and $Q_{aniso}$ is $Q$'s anisotropic part (recall that a quadratic
form is \emph{isotropic} if there is an $x\neq 0$ such that $Q(x) = 0$ and
\emph{anisotropic} otherwise). Below, we reproduce a
table (adapted from~\cite{nevins}), which lists diagonal matrix representatives for all
isometry classes of quadratic forms by dimension. The quadratic forms
$Q_{2i}$ in the proposition above can be chosen from among those in
the table (replacing $\varpi^{-1}$ with $1$ when working in $\fg_{\FF_q}$).

\begin{figure}[h]
\begin{center}
\begin{tabular}{|l|l|}
 \hline 
 Dimension & Representatives of Isometry Classes  \\
\hline
1 & $[t]$, \qquad $t\in\{1,\epsilon ,\varpi^{-1},\epsilon\varpi^{-1}\}$ \\
2&$\dia(t,\epsilon t )$, \qquad $t\in\{1,\varpi^{-1}\}$ \\
&$\dia(t\varpi^{-1},-t^\prime )$, \qquad $t,t^\prime\in\{1,\epsilon\}$ \\
3&$\dia(1 , -\epsilon , t\varpi^{-1} )$, \qquad $t\in\{1,\epsilon\}$\\
&$\dia(\varpi^{-1},-\epsilon\varpi^{-1}  , t )$, \qquad $t\in\{1,\epsilon\}$\\
4&$\dia(1,-\epsilon , -\varpi^{-1} , \epsilon\varpi^{-1} )$  \\
\hline
\end{tabular}
\caption{Representatives of isometry classes of quadratic forms listed
  by dimension. The
  element $\epsilon$ is a nonsquare unit in $\mathcal{O}_F$.}
\end{center}
\end{figure}

Henceforward, we will denote by $\Nil(\fg )$ the set of
rational nilpotent orbits in $\fg$, and identify those elements with pairs $(\lambda ,
\cl{Q})$ consisting of a partition of $2n$ and an $n$-tuple of quadratic
form representatives. Geometric orbits will simply be
identified with partitions of $2n$. 

\begin{example}\label{ex:reps}
For reasons that will be made clear in the next section, we will be interested in rational orbits contained in the
geometric orbit matching a partition of the form $\lambda = (2i, 2i-2,
\dots , 4,2)$ for some $i$ or the union of two such partitions,
meaning $\lambda = (2i,2i-2,\dots ,2j+2,2j,2j,2j-2,2j-2,\dots
,2,2)$ for $i\geq j$. Representatives in the first case follow a
simple pattern that the following low-dimensional examples should make
clear.
\\
\[
\begin{array}{rc}
\symp_{2}(F)\colon &
\begin{bmatrix}0&Q_2\\0&0\end{bmatrix},
\\
\symp_{6}(F): & \left[\begin{array}{c:cc|c:cc}0&&&Q_2 &&\\ \hdashline
     &0&1&&0&0\\ &0&0&&0& -Q_4 \\ \hline 0&&&0&&\\ \hdashline
     &0&0&&0&0\\ &0&0&&-1&0 \end{array}\right], \\
\end{array}
\]
where $Q_2,Q_4,Q_6\in\{1,\epsilon,\varpi^{-1},\epsilon\varpi^{-1}\}$.
In these cases, where each part in $\lambda$ has multiplicity 1,
only 1-dimensional (in particular, only
anisotropic) quadratic forms appear.

For partitions of the second type we can look at $\symp_4(F)$, which contains a geometric orbit parametrized by
$\lambda = (2,2)$. Rational orbit representatives there take the form
\[
\left[\begin{array}{c:c|c:c}0&&Q_2&\\ \hdashline &0&&Q_2^\prime\\ \hline
  0&&0&\\ \hdashline &0&&0\end{array}\right]\textrm{ \ or \ } 
\begin{bmatrix}0&0&0&1\\0&0&1&0\\0&0&0&0\\0&0&0&0\end{bmatrix} 
\]
where $Q_2,Q_2^\prime\in\{1,\epsilon,\varpi^{-1},\epsilon\varpi^{-1}\}$.
In $\symp_8(F)$, there is a geometric orbit parametrized by $\lambda =
(4,2,2)$. There, most rational orbit representatives take the form
\[
\left[\begin{array}{c:ccc|c:ccc}
0&&&&Q_2&&&\\ \hdashline &0&0&0&&Q_2^\prime&0&0\\  &0&0&1&&0&0&0\\
&0&0&0&&0&0&Q_4\\ \hline 0&&&&0&&& \\ \hdashline &0&0&0&&0&0&0 \\
&0&0&0&&0&0&0 \\ &0&0&0&&0&-1&0\end{array}\right] 
\]
where, again, all entries are drawn from $\{1,\epsilon,\varpi^{-1},\epsilon\varpi^{-1}\}$.

\end{example}

\subsection{Lusztig Functions}\label{subsec:lusztigfunctions}

Throughout this section all Lie groups and algebras are
assumed to be defined over a finite field $\FF_q$. 

In his Utrecht paper, \cite{luszfourier}, Lusztig investigated the existence of
nilpotently supported, $\cl{\QQ}_\ell$-valued functions on semisimple finite Lie algebras
defined over $\FF_q$ whose Fourier
transform is also nilpotently supported. Employing his theory of character
sheaves~\cite[\emph{et seq.}]{CShI}, he determined that such functions are actually quite
rare (\cite[Cor.\! 10]{luszfourier}). Up to scalar multiplication there is at most one such function
on $\symp_{2n}(\FF_q)$ and it exists exactly when $n$ is a triangular
number (\emph{i.e.}, $n= i(i+1)/2$ for some $i$).

So, let us fix one of these scalar multiples and denote it by $\lsup{\circ\!}{f}$, in imitation
of Waldspurger in~\cite{waldtome}, where these functions are officially called
``Lusztig functions.'' If more than one Lusztig function appears, we
will distinguish them with subscripts: $\oh{f}_{2n}$, and so on. These
functions are supported on the geometric orbit
parametrized by the partition of $\lsup{\circ}{\lambda} \ceq
(2i,2i-2,\dots ,4,2)$ of $2n$. These will be called the \emph{Lusztig
  orbit}  and \emph{Lusztig partition} of $\symp_{2n}$,
respectively. Finally, since they will be relevant after shifting
attention to the $p$-adic field $F$, we call any partition that is a
union of two Lusztig partitions an \emph{extended Lusztig partition}.


As in section \ref{sec:nilpotentorbits}, rational orbits contained in
a Lusztig orbit are represented by tuples of quadratic form
representatives, either $[1]$ or $[\epsilon ]$, where $\epsilon$ is a nonsquare
in $\FF_q^\times$.  Let 
$e[(\lambda,\cl{Q})]$ be the characteristic function of the orbit
corresponding to the pair $(\lambda ,\cl{Q})\in\Nil(\fg_{\FF_q})$, and define
\begin{equation*}\label{eq:formdeterminant}
\lsup{\circ}{\operatorname{sgn}}(\cl{Q}) = \displaystyle
\prod_{\stackrel{i\textrm{ even}}{i/2 \textrm{ odd}}}
\operatorname{sgn}\circ (-1)^{\lfloor
  \operatorname{dim}(Q_i)/2\rfloor}\det(Q_i),
\end{equation*}
where $\det$ means the class of the determinant of $Q_i$ in
$\FF_q^\times/(\FF_q^\times)^2$. Then,
\begin{equation*}\label{eq:ohfdefinition}
\lsup{\circ\!}{f}_{2n} = \displaystyle \sum_{(\lsup{\circ}{\lambda},\cl{Q})\in\operatorname{Nil}(\fg (\FF_q))}
\lsup{\circ}{\operatorname{sgn}}(\cl{Q})e[(\lsup{\circ}{\lambda},\cl{Q})].
\end{equation*}

Lusztig shows that---even more than having Fourier
transform supported on $\fgnil$---these functions are actually eigenfunctions of
the Fourier transform. Their eigenvalues have been determined by
Waldspurger \cite[Prop. V.8]{waldtome}: For any $x\in\FF_q$, let
\[
\operatorname{sgn}(x)  = \begin{cases}0 & \textrm{if }x=0,\\ 1&
  \textrm{if }
  x\in\left(\FF_q^\times\right)^2,\\-1 & \textrm{if } x\in\FF_q^\times -
  \left(\FF_q^\times\right)^2. \end{cases}
\]
Then, the eigenvalue of the Lusztig function on $\symp_{2n}(\FF_q)$ under
the Fourier transform (defined with respect to a character $\psi\colon\FF_q\rightarrow\CC^\times$) is
\[
\oh{\gamma}_{2n} = \left(\operatorname{sgn}(-1) \displaystyle  q^{-\frac{1}{2}}\sum_{x\in\FF_q}
\operatorname{sgn}(x)\psi (x)\right)^{\frac{k(k+1)}{2}}.
\]

\vskip .5cm

\begin{example}\label{ex:lusztigfns}
\begin{enumerate}
\item In the case of $\symp_{2}(\FF_q)$, the Lusztig function is 
\[
\oh{f} = e\left[\begin{bmatrix}0&1\\0&0\end{bmatrix}\right] -
e\left[\begin{bmatrix}
   0 &\epsilon\\0&0\end{bmatrix}\right],
\]
the two terms being the characteristic functions of the rational orbit of the
given representative.
\item For $\symp_{6}(\FF_q)$, the Lusztig function is
\[
\oh{f} = e[(1,1)] + e[(1,\epsilon )] - e[(\epsilon ,1)] - e[(\epsilon
, \epsilon )].
\]
\end{enumerate}
\end{example}

\section{$p$-adic Lusztig Functions}\label{sec:plusztigfunctions}

Our approach to finding the eigendistributions among Waldspurger's
distributions involves relating the known
properties of generalized Green functions on finite Lie algebras to their inflations to 
$\fg$ over $F$, $F$ our $p$-adic field. The obvious way of
mediating between these settings is through the parahoric subalgebras of
$\fg$ and their associated reductive quotients. 

For any point
$x\in\mathcal{B}(\GG )$ of the Bruhat-Tits building of $\GG$, we
denote by $G_x$ and $\fg_x$ respectively the parahoric subgroup of $G$ and
parahoric subalgebra of $\fg$ associated to $x$ (see
\cite{bruhattits84} or \cite{rabinoff}). Both depend only on
the facet $\fac$ of $\mathcal{B}(\GG )$ containing $x$, so we will also substitute
$\fac$ for $x$ in the notation, as in $G_\fac$ and $\fg_\fac$. If $r\in\RR$, we denote the
corresponding Moy-Prasad filtration subgroups and subalgebras (see \cite{moyprasad}) by
$G_{x,r}$ and $\fg_{x,r}$. Finally, we write $\cl{G}_x =
G_{x,0}/G_{x,0+}$ and $\cl{\fg}_{x} = \fg_{x,0}/\fg_{x,0+}$ for the
reductive quotients of $G$ and $\fg$ attached to $x$ (or a facet
containing $x$).


\subsection{Inflations}\label{subsec:inflations}

Generalized Green functions are defined in \cite[Ch. II]{waldtome},
where they are denoted $Q_T$. Aside from mentioning that they come in
two broad types, Deligne-Lusztig characters on one hand and Lusztig
functions on the other, we will not reprise their definition here since finding the Fourier
eigendistributions among the distributions Waldspurger defines only
requires knowing the eigenfunctions among them. It is known, as a
byproduct of the proof of
\cite[Prop. II.8]{waldtome}, that these are precisely the
Lusztig functions.

\begin{definition}\label{def:compositeplusztig}
Let $\fac$ be a facet of $\mathcal{B}(\GG)$ such that 
\[
\cl{\fg}_\fac \iso \symp_{2\Delta_1}(\FF_q)\times\symp_{2\Delta_2}(\FF_q
)\times\cdots\times\symp_{2\Delta_m}(\FF_q),
\]
where $\Delta_i$ is triangular for all $i$. Let
\[
\lsup{\circ\!}{f}_\fac(Y) = \begin{cases}
  \left(\lsup{\circ\!}{f}_{2\Delta_1}\times\lsup{\circ\!}{f}_{2\Delta_2}\times\cdots\times\lsup{\circ\!}{f}_{2\Delta_m}\right)\circ\rho_{\fac,0}(Y), & Y\in\fg_{\fac,0}\\ 0, &
  Y\not\in\fg_{\fac,0}, \end{cases}
\]
where $\rho_{\fac ,0}\colon\fg_{\fac ,0}\rightarrow\cl{\fg}_{\fac}$ is
the reduction map. We call all such functions \emph{$p$-adic Lusztig functions}.
\end{definition}

The following proposition further justifies the name. In preparation,
we fix a Fourier transform on $\fg$. Fix a Killing form\footnote{The
  existence of such requires that $p\neq 2$.} $\langle \
\ , \ \rangle\colon \fg\times\fg \rightarrow F$ for $\fg$ and a
nontrivial additive character $\psi\colon F\rightarrow \CC^\times$
that is trivial on the maximal ideal $\fp_F$ of $F$. Then, for any
$f\in\mathcal{C}^\infty_c(\fg )$,
\[
(\mathscr{F}f)(X) = \widehat{f}(X) = \displaystyle \int_\fg \psi
(\langle X,Y\rangle )f(X)\mathrm{d}X,
\]
with respect to the Haar measure normalized so that $\operatorname{meas}({\fg}_{x,0+}) = \abs{\cl{\fg}_{x}}^{-\frac{1}{2}} =
\abs{\FF_q}^{-\frac{\operatorname{dim}(\cl{\fg}_x)}{2}}$ for all
$x\in\mathcal{B}(\GG )$.

\begin{proposition}\label{prop:compositeplusztig}
In the setting of Definition~\ref{def:compositeplusztig},
$\lsup{\circ\!}{f}_\fac$ is an eigenfunction of the Fourier transform on
$\fg$.
\end{proposition}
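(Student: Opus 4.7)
The plan is to compute $\widehat{\lsup{\circ\!}{f}_\fac}$ by reducing to a finite-field computation on $\cl{\fg}_\fac$, where the corresponding statement is already known from Lusztig's work as refined by Waldspurger. The key ingredient is the standard compatibility between the $p$-adic Fourier transform and reduction to the reductive quotient: if $h\in\mathcal{C}^\infty_c(\fg)$ is supported on $\fg_{\fac,0}$ and invariant under translation by $\fg_{\fac,0+}$, then $\widehat{h}$ has the same support and translation invariance, and its descent to $\cl{\fg}_\fac$ is the finite Fourier transform of the descent of $h$ with respect to the character $\bar{\psi}$ that $\psi$ induces on $\mathcal{O}_F/\fp_F\iso\FF_q$. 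Two facts make this work: (i) the Killing form restricts to a nondegenerate bilinear form on $\cl{\fg}_\fac$ (using $p$ good), and, composed with $\psi$, identifies $\fg_{\fac,0+}$ with the annihilator of $\fg_{\fac,0}$; (ii) the normalization $\operatorname{meas}(\fg_{\fac,0+})=\abs{\cl{\fg}_\fac}^{-1/2}$ chosen before the statement is precisely the one that cancels the Gauss-type prefactor otherwise appearing in this comparison.

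Granting this reduction, the proposition is three short steps. By construction $\lsup{\circ\!}{f}_\fac$ is supported on $\fg_{\fac,0}$ and $\fg_{\fac,0+}$-invariant, and descends under $\rho_{\fac,0}$ to the external product $\lsup{\circ\!}{f}_{2\Delta_1}\times\cdots\times\lsup{\circ\!}{f}_{2\Delta_m}$ on $\cl{\fg}_\fac$. Because $\cl{\fg}_\fac$ splits as a direct product, its finite Fourier transform factors as the tensor product of the Fourier transforms on each $\symp_{2\Delta_i}(\FF_q)$, and each factor $\lsup{\circ\!}{f}_{2\Delta_i}$ is a Fourier eigenfunction with eigenvalue $\oh{\gamma}_{2\Delta_i}$ by the result of Lusztig and Waldspurger recalled in Section~\ref{subsec:lusztigfunctions}. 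Consequently, the descent of $\widehat{\lsup{\circ\!}{f}_\fac}$ equals $\bigl(\prod_{i=1}^{m}\oh{\gamma}_{2\Delta_i}\bigr)$ times the descent of $\lsup{\circ\!}{f}_\fac$, and inflating back gives
\[
\widehat{\lsup{\circ\!}{f}_\fac} \;=\; \left(\prod_{i=1}^{m}\oh{\gamma}_{2\Delta_i}\right)\lsup{\circ\!}{f}_\fac,
\]
which is the desired eigenfunction statement.

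The main obstacle is the first step, the descent/inflation compatibility for the $p$-adic Fourier transform. This fact is essentially standard in the Moy--Prasad literature and is used repeatedly in \cite{waldtome}, but to invoke it cleanly here one must verify that the Killing form identifies $\fg_{\fac,0}$ as its own dual lattice shifted by $\fg_{\fac,0+}$, and that the chosen Haar measure makes all normalization constants trivial. Once that bookkeeping is settled, the factorization over the product decomposition of $\cl{\fg}_\fac$ and the already-known eigenvalues of the Lusztig functions make the conclusion, and the explicit form of the eigenvalue, immediate.
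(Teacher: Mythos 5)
Your proof follows essentially the same route as the paper: use the compatibility of inflation with the $p$-adic Fourier transform (the paper invokes \cite[Prop.~1.13]{cunninghales} for this, where you explain the mechanism via the Killing form and the measure normalization) to reduce to the eigenfunction statement over $\FF_q$, and then invoke the known eigenvalues of the Lusztig functions. Two differences are worth noting. First, you obtain the factorization of the finite Fourier transform over the direct-product decomposition of $\cl{\fg}_\fac$ directly as a tensor-product identity; the paper instead writes $\oh{f}_{2\Delta_1}\times\oh{f}_{2\Delta_2}$ as a pointwise product of $\oh{f}_{2\Delta_1}\times 1$ and $1\times\oh{f}_{2\Delta_2}$ and applies the convolution theorem. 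Your version is cleaner and equivalent. Second, the paper devotes the first half of its proof to a Bruhat--Tits argument (via the extended Dynkin diagram of type $C_n$) showing that $\fac$ must be a vertex and that $\cl{\fg}_\fac$ can have at most two $\symp$-factors, so that it suffices to treat $m\le 2$. That observation is really needed for the enumeration in Proposition~\ref{prop:plusztigenum} rather than for the eigenfunction claim itself; your argument handles arbitrary $m$ uniformly and so does not require it. The one point to make fully precise, as you flag yourself, is the descent/inflation compatibility; the paper's normalization $\operatorname{meas}(\fg_{\fac,0+})=\abs{\cl{\fg}_\fac}^{-1/2}$ is chosen exactly so that this holds with trivial constant, and you correctly identified that as the crux.
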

\begin{proof}
We begin by observing that in fact no more than two factors are
required in Definition \ref{def:compositeplusztig} to capture all
$p$-adic Lusztig functions. As noted by J.-K. Yu
in~\cite[$\S7.6$]{Yusmoothmodels}, if $\Phi$ is the set of roots of $G$ with
respect to a maximal $F$-torus $T$, the root data of the reductive quotient
$\cl{\fg}_\fac$ is $(X^*(T),\Phi_\fac ,X_*(T),\Phi^\vee_\fac )$, where
the subset $\Phi_\fac$ of $\Phi$ associated to the facet $\fac$ is the
set of roots $\alpha\in\Phi$ such that there is an affine
root $\alpha + n$ whose reflecting hyperplane contains $\fac$ (or, more
precisely, the smallest affine space containing $\fac$ in the
apartment determined by $T$). As Yu says, ``this allows us to see the isomorphism class of
[$\cl{\fg}_\fac$] by pure thought when
$G$ is split adjoint or split simply connected and $\fac$ is
a vertex.'' We find ourselves in exactly this case, since we are indeed
dealing with a split simply connected group, and if the reductive
quotient is to have the shape required by Definition~\ref{def:compositeplusztig}, its root system
$(\mathcal{A},\Phi_\fac,\mathcal{A}^\vee,\Phi_\fac^\vee )$ must be semisimple (\emph{i.e.},
$\abs{\Phi_F} = \dim{\mathcal{A}}$). This cannot be the case if $\fac$ has positive
dimension, since then the affine space containing $\fac$ must be spanned
by some nonempty subset of $\Phi$. Thus, $\fac$ must be a vertex.

From this, it follows that every reductive quotient of $\fg$ that
carries a Lusztig function has the form $\symp_{2\Delta_1}(\FF_q )\times
\symp_{2\Delta_2}(\FF_q)$, where $n=\Delta_1 + \Delta_2$ and $\Delta_1$
and $\Delta_2$ are triangular numbers (taking $\symp_{0}(\FF_q)$ to be
the trivial algebra). To see this, consider the extended Dynkin
diagram of type $C_n$:
\[
 \xy
\POS (-10,0)  ="z",
\POS (0,0) *\cir<3pt>{} ="a",
\POS (10,0) *\cir<3pt>{} ="b",
\POS (20,0) *\cir<3pt>{} ="c",
\POS (30,0) *\cir<3pt>{} ="d",
\POS (40,0) *\cir<3pt>{} ="e",
\POS (50,0) *\cir<3pt>{} ="f",
\POS (60,0) ="g",
\POS "z" \ar@{}^<<<{} "a",
\POS "a" \ar@{=>}^<<<{\alpha_{0}} "b",
\POS "b" \ar@{-}^<<<{\alpha_{1}} "c",
\POS "c" \ar@{.}^<<<{\alpha_{2}} "d",
\POS "d"\ar@{.}^<<<<{\alpha_{l}} "e",
\POS "e" \ar@{<=}^<<<{\alpha_{n-1}} "f",
\POS "f" \ar@{}^<<<<{\alpha_{n}} "g"
\endxy
\]
The roots corresponding to the nodes are $\alpha_0 = -2e_1$, $\alpha_n
= 2e_n$, and $\alpha_i = e_{i} - e_{i+1}$, $1\leq i \leq n-1$. The
Dynkin diagrams resulting from deleting a node from the extended
diagram tell us the isomorphism classes of reductive quotients at vertices.
Removing the $0^\textrm{th}$ or $n^\textrm{th}$ node gives the Dynkin diagram
of type $C_n$, corresponding to a reductive quotient isomorphic to
$\symp_{2n}(\FF_q)$. Removing the $i^\textrm{th}$ node for $1\leq i
\leq n-1$ gives a disconnected diagram corresponding to
$\symp_{2i}(\FF_q)\times\symp_{2(n-i)}(\FF_q)$.

Therefore, we can assume $\oh{f}_\fac$ is the inflation of a Lusztig
function or the product of two Lusztig functions. With the Haar
measure on $\fg$ normalized as above, we have that
$\mathscr{F}(\oh{f}_\fac ) =
\left(\mathscr{F}(\oh{f}_{2\Delta_1}\times\oh{f}_{2\Delta_2})\right)_{\fac}
=
\mathscr{F}(\oh{f}_{2\Delta_1}\times\oh{f}_{2\Delta_2})\circ\rho_{\fac
  ,0}$, where $\rho_{\fac , 0}$ is the restriction map; \emph{i.e.}, the Fourier
transform commutes exactly with inflation. This follows from \cite[Prop. 1.13]{cunninghales}. It suffices
to show, then, that $\oh{f}_{\Delta_1}\times\oh{f}_{\Delta_2}$ is an
eigenfunction of the Fourier transform on
$\symp_{2\Delta_1}(\FF_q)\times\symp_{2\Delta_2}(\FF_q)$.

By the convolution theorem, $\mathscr{F}(\oh{f}_{2\Delta_1}\times\oh{f}_{2\Delta_2}) =
c(\mathscr{F}(\oh{f}_{2\Delta_1}\times 1))\ast (\mathscr{F}(1\times\oh{f}_{2\Delta_2}))$
for some constant $c$. With normalizations as above, $c=1$, so we have
\[
\begin{aligned}
\mathscr{F}(\oh{f}_{2\Delta_1}\times\oh{f}_{2\Delta_2})  &= (\mathscr{F}(\oh{f}_{2\Delta_1}\times 1))\ast
(\mathscr{F}(1\times\oh{f}_{2\Delta_2}))(s,t) \\ & =
\displaystyle \int \mathscr{F}(\oh{f}_{2\Delta_1}\times 1)(\sigma ,\tau
)\mathscr{F}(1\times\oh{f}_{2\Delta_2})(s-\sigma ,t-\tau )d\sigma d\tau \\
&= \displaystyle \int \delta_0(\tau )\mathscr{F}(\oh{f}_{2\Delta_1})(\sigma
)\delta_0(s-\sigma )\mathscr{F}\oh{f}_{2\Delta_2})(t-\tau )d\sigma d\tau \\
&= \mathscr{F}(\oh{f}_{2\Delta_1})(s)\cdot\mathscr{F}(\oh{f}_{2\Delta_2})(t)\\
&= \oh{\gamma}_{2\Delta_1}\oh{\gamma}_{2\Delta_2}\oh{f}_{2\Delta_1}\oh{f}_{2\Delta_2},
\end{aligned}
\]
where $\oh{\gamma}_{2\Delta_1}$ and $\oh{\gamma}_{2\Delta_2}$ are the eigenvalues of
$\oh{f}_{2\Delta_1}$ and $\oh{f}_{2\Delta_2}$ respectively, which exist by
\cite[Cor. 10]{luszfourier}. With that, we are done.
\end{proof}

\begin{remark}\label{rem:compositeeigenvalues}
Note that, from the formula for the eigenvalues proved by Waldspurger
(recalled in $\S\ref{subsec:lusztigfunctions}$), $\oh{\gamma}_{2\Delta_1}$
and $\oh{\gamma}_{2\Delta_2}$ differ only in their exponents, which means that the
eigenvalue of $\oh{f}_{2\Delta_1}\times\oh{f}_{2\Delta_2}$ is
\[
\oh{\gamma}_{2\Delta_1}\oh{\gamma}_{2\Delta_2} =  \left(\operatorname{sgn}(-1) \displaystyle  q^{-\frac{1}{2}}\sum_{x\in\FF_q}
\operatorname{sgn}(x)\psi (x)\right)^{\Delta_1 +\Delta_2}.
\]
With our normalizations, this is the eigenvalue of the inflation of
$\oh{f}_{2\Delta_1}\times\oh{f}_{2\Delta_2}$ to $\fg$ as well.
\end{remark}

\begin{proposition}\label{prop:plusztigenum}
The distinct nontrivial $p$-adic Lusztig functions on $\fg =
\symp_{2n}(F)$ can be classified up to conjugacy by the following conditions on $n$:
\begin{enumerate}
\item If $n$ is triangular, there exist two
  $p$-adic Lusztig functions on $\fg$ up to conjugacy, corresponding to
 $G$-conjugacy classes of hyperspecial vertices in $\mathcal{B}(\Sp_{2n} )$.
\item For each representation of $n$ as a sum of \emph{distinct}
  triangular numbers, $\Delta_1$ and $\Delta_2$, there exist two $p$-adic Lusztig
  functions on $\fg$.
\item If $n=2\Delta$, $\Delta$ a triangular number, there exists one
  $p$-adic Lusztig function on $\fg$ obtained by inflation from the
  reductive quotient $\symp_n\times\symp_n$.
\end{enumerate}
Therefore, the dimension of the $p$-adic Lusztig functions up to
conjugacy is equal to the number of noncommuting representations of
$n$ as a sum of less than three triangular numbers. 
\end{proposition}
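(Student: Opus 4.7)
The plan is to combine the structural constraint on supporting facets obtained in Proposition~\ref{prop:compositeplusztig} with an enumeration of vertex types in $\mathcal{B}(\Sp_{2n})$ and an analysis of $G$-conjugacy of vertices. First I would recall from the proof of Proposition~\ref{prop:compositeplusztig} that any facet $\fac$ on which a nontrivial $\oh{f}_\fac$ can be defined must be a vertex, with reductive quotient of the form $\symp_{2\Delta_1}(\FF_q)\times\symp_{2\Delta_2}(\FF_q)$, and that existence of a Lusztig function on this quotient (via Lusztig's enumeration, \cite[Cor.\! 10]{luszfourier}) forces each $\Delta_i$ to be triangular, where I allow one of them to be zero by treating $\symp_{0}(\FF_q)$ as trivial.

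Second, I would enumerate vertex types using the extended Dynkin diagram of type $\tilde C_n$ reproduced in Proposition~\ref{prop:compositeplusztig}. Removing node $0$ or node $n$ leaves a $C_n$ diagram, so the reductive quotient at these two hyperspecial vertices is $\symp_{2n}(\FF_q)$ and supports a Lusztig function if and only if $n$ is itself triangular; removing an interior node $i$ with $1\leq i\leq n-1$ yields the disjoint union of $C_i$ and $C_{n-i}$, with reductive quotient $\symp_{2i}(\FF_q)\times\symp_{2(n-i)}(\FF_q)$, which supports a Lusztig function exactly when $i$ and $n-i$ are both triangular. This gives exactly one candidate vertex for each \emph{ordered} decomposition of $n$ as a sum of at most two triangular numbers.

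Third, I would pass from vertices to $G$-conjugacy classes. Because $\GG=\Sp_{2n}$ is simply connected, the extended affine Weyl group coincides with the affine Weyl group $W_{\mathrm{aff}}$, which preserves vertex type on every apartment, so vertices of distinct types in a fixed apartment lie in distinct $G$-orbits. Combining this with step two yields the three cases: two distinct $G$-orbits of hyperspecial vertices when $n$ is triangular, giving~(1); two distinct $G$-orbits, of types $\Delta_1$ and $n-\Delta_1$, for each proper decomposition $n=\Delta_1+\Delta_2$ with $\Delta_1\neq\Delta_2$, giving~(2); and the single vertex of type $\Delta$ when $n=2\Delta$, giving~(3). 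Reading off the totals reproduces the claimed count as the number of noncommuting representations of $n$ as a sum of fewer than three triangular numbers.

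The principal obstacle is the $G$-conjugacy analysis in the third step: one must rule out any identification of vertices of different types by elements of $G$ beyond the affine Weyl group. This rests on the fact that the quotient of the extended affine Weyl group by $W_{\mathrm{aff}}$ equals $X_*(T)/Q^\vee$, which is trivial for the simply connected group $\Sp_{2n}$, so the diagram automorphism of $\tilde C_n$ that exchanges $0$ and $n$ is \emph{not} realised by any element of $\Sp_{2n}(F)$ (it becomes realisable only after passage to $\mathrm{PSp}_{2n}$). Once this is verified, the counting claim is immediate.
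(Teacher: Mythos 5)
Your proposal is correct and follows essentially the same route as the paper: read the possible reductive quotients off the extended Dynkin diagram of $\tilde C_n$ (as already set up in Proposition~\ref{prop:compositeplusztig}), observe that removing node $i$ gives $\symp_{2i}(\FF_q)\times\symp_{2(n-i)}(\FF_q)$, and then note that a Lusztig function lives on this quotient precisely when both $i$ and $n-i$ are triangular. Where you differ is in the $G$-conjugacy analysis. The paper handles the hyperspecial case by citing Shemanske for the fact that $\mathcal B(\Sp_{2n})$ has exactly two $G$-conjugacy classes of hyperspecial vertices, and for cases~(2) and~(3) it simply asserts that two (respectively one) vertices with the required reductive quotient exist, leaving implicit the claim that the two vertices of case~(2) are not $G$-conjugate to one another. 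You instead give the unified explanation: because $\Sp_{2n}$ is simply connected, $\Omega = X_*(T)/Q^\vee$ is trivial, so $G$ acts type-preservingly on the building, and the diagram automorphism of $\tilde C_n$ swapping nodes $i$ and $n-i$ is not realized by any element of $G$ (only by $\mathrm{PSp}_{2n}$). This both recovers Shemanske's two-class count for hyperspecial vertices and justifies, rather than tacitly assumes, the non-conjugacy of the vertices in cases~(2) and~(3). The two proofs are the same in structure; yours makes explicit a step the paper outsources to a reference in one case and passes over in the others.
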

\begin{proof}
\begin{enumerate}
\item $n$ is triangular: Given the foregoing, it is clear that the
  functions here arise as inflations of the Lusztig functions that
  exist on $\symp_{2n}(\FF_q)$, and thus correspond to vertices
  that give this reductive quotient, \emph{i.e.}, the hyperspecial
  vertices. 
 The only point that might require
  justification is the implicit assertion that there are only two
  conjugacy classes of hyperspecial vertices in
  $\mathcal{B}(\Sp_{2n})$. This is proved in~\cite[see
  pg. 3]{shemanske}, and correspond to the first and last nodes of the
  extended Dynkin diagram in the proof of Proposition~\ref{prop:compositeplusztig}.
\item $n = \Delta_1 + \Delta_2$: In light
  of~\ref{prop:compositeplusztig}, the claim here is just that
  whenever $n$ has this form there are two vertices such that 
\[
\cl{\fg}_{x} = \symp_{2\Delta_1}(\FF_q)\times\symp_{2\Delta_2}(\FF_q).
\]
From \cite[$\S7.6$]{Yusmoothmodels}, and as can be seen
in the proof of Proposition~\ref{prop:compositeplusztig}, these two vertices
exist: they correspond to the $\Delta_1+1^\textrm{th}$ and $\Delta_2+1^\textrm{th}$ nodes in
the extended Dynkin diagram.
\item $n=2\Delta$: Same argument as above, yielding in this case only
  one vertex.
\end{enumerate}
\end{proof}

\begin{example}
\begin{enumerate}
\item The Lusztig function on $\symp_{2}{}(\FF_q)$ is 
\[
\oh{f}_2 = e\left[\begin{bmatrix}0&1\\0&0\end{bmatrix}\right] -
e\left[\begin{bmatrix}
    0&\epsilon\\0&0\end{bmatrix}\right].
\]

In this case there are two conjugacy classes of parahorics available
to inflate this function through. Working in the standard apartment,
we take $x_0$ and $x_1$ to be hyperspecial vertices corresponding to
these two classes of parahorics, 
\[
\begin{bmatrix}0&1\\0&0\end{bmatrix},
\quad \begin{bmatrix}0&\epsilon\\0&0\end{bmatrix},
\quad \begin{bmatrix}0&\varpi^{-1}\\0&0\end{bmatrix},
\quad \begin{bmatrix}0&\epsilon\varpi^{-1}\\0&0\end{bmatrix}
\]
as representatives for the four distinguished rational nilpotent orbits in the
Lusztig orbit of $\symp_{2}(\cl{F})$, and write $e[(1)]$, $e[(\epsilon )]$,
  $e[(\varpi )]$, and $e[(\epsilon\varpi )]$ for the
  characteristic functions of those orbits. Then the $p$-adic Lusztig functions on
  $\symp_{2}$ over $F$ coming from this apartment are
\[
\begin{aligned}
\oh{f}_{x_0} =& \begin{cases} e[(1)] - e[(\epsilon )] & \textrm{on
  }\symp_2(F)_{x_0}\\ 0 & \textrm{on }\symp_2(F)\setminus\symp_2(F)_{x_0} \end{cases}\\
\oh{f}_{x_1} =& \begin{cases} e[(\varpi^{-1})] - e[(\epsilon\varpi^{-1} )] & \textrm{on
  }\symp_2(F)_{x_1}\\ 0 & \textrm{on }\symp_2(F)\setminus\symp_2(F)_{x_1} \end{cases}.
\end{aligned}
\]
\item The Lusztig function on $\symp_{6}(\FF_q)$ is
\[
\oh{f} = e[(1,1)] + e[(1,\epsilon )] - e[(\epsilon ,1)] - e[(\epsilon
, \epsilon )].
\]
Again there are two relevant conjugacy classes of parahorics
associated to hyperspecial vertices,
denoted $\symp_{6,x_0}$ and $\symp_{6,x_1}$.
The distinguished orbits for each parahoric are:
\begin{itemize}
\item $\symp_{6,x_0}{}\colon (1,1),(1,\epsilon ),(\epsilon ,1),(\epsilon ,
  \epsilon )$
\item $\symp_{6,x_1}{}\colon (\varpi^{-1} ,\varpi^{-1} ),(\varpi^{-1} ,\epsilon\varpi^{-1}
  ),(\epsilon\varpi^{-1} ,\varpi^{-1} ),(\epsilon\varpi^{-1} ,
  \epsilon\varpi^{-1} )$
\end{itemize}
so the two $p$-adic Lusztig functions are
\begin{equation*}
\oh{f}_{x_0} = e[(1,1)]+e[(1,\epsilon )]-e[(\epsilon ,1)]-e[(\epsilon ,
  \epsilon )] 
\end{equation*}
\begin{multline*}
\oh{f}_{x_1} =  e[(\varpi^{-1} ,\varpi^{-1} )]+e[(\varpi^{-1} ,\epsilon\varpi^{-1} )]\\-e[(\epsilon\varpi^{-1} ,\varpi^{-1} )]-e[(\epsilon\varpi^{-1} ,
  \epsilon\varpi^{-1} )] 
\end{multline*}
on their respective parahorics, and trivial outside them.
\end{enumerate}
\end{example}

\section{The Main Result}\label{sec:mainresult}

In this section, $\GG$ and $\boldsymbol{\fg}$ are defined over the
$p$-adic field $F$.

Recall the following definitions from the introduction: $\mathcal{C}^\infty_c(\fg)$ is the
$\CC$-algebra of locally constant,
compactly supported complex-valued functions on $\fg$, $\mathcal{D}$
is the space of invariant distributions on $\fg$ (those elements of
$(\mathcal{C}^\infty_c(\fg))^\vee$ that are invariant under the action of
$G$), $\mathcal{D}(\omega )$ is the subset of elements of
$\mathcal{D}$ supported on the subset $\omega$.  Finally, the subset
$\mathcal{H}$ is
\[
\mathcal{H} = \displaystyle \sum_{C}\mathcal{C}_c(\fg/\fg_C),
\]
the sum over all alcoves $C$ of $\mathcal{B}(\GG )$. Each summand is
the subset of $\mathcal{C}^\infty_c(\fg )$ consisting of elements
invariant under translation by elements of the parahoric subalgebra $\fg_C$.

\subsection{Distributions From Inflations of Generalized Green
  Functions}\label{subsec:distributions}

We begin by describing the distributions Waldspurger associates to
generalized Green functions. In passing from $\FF_q$ to the
$p$-adic field $F$, he makes use of a parameter set $\Theta (V)$
consisting of 5-tuples of combinatorial data. The
eigenfunctions are those indexed by $\theta =
(k^\prime,k^{\prime\prime},\varnothing,\varnothing,\varnothing )$ where
$k^\prime (k^\prime + 1) + k^{\prime\prime}(k^{\prime\prime}+1) =
2n$, together with an element $X_T$ of the torus associated to the
corresponding generalized Green function, $Q_T$. 

To describe the distributions without wading into the intricacies
of $\Theta (V)$, we will say that $\theta\in\Theta (V)$ consists of
the following data:
\begin{itemize}
\item An orthogonal decomposition $V = V_0\oplus V_1$;
\item a maximal
  unramified torus $\TT\leq \GG (V_1)$ defined over $F$;
\item a facet $\fac$ of $\mathcal{B}(\GG (V_0))$ such that there
  exists a $p$-adic Lusztig function $\oh{f}_\fac$, \emph{i.e.}, such
  that there is a Lusztig function or product of Lusztig functions defined on $\cl{\fg
    (V_0)}_\fac$. 
\end{itemize}

Given such a parameter, choose an element $X_T\in \mathfrak{t} =
\operatorname{Lie}(\TT )(F)$
``integral of regular reduction,'' \emph{i.e.}, with
$Z_G(X_T) =T$ and whose image in $\cl{\fg}_\fac$ has
centralizer equal to the image of $\mathfrak{t}$ in $\cl{\fg}_\fac$.

Then, with Haar measure normalized as described in
Section~\ref{subsec:inflations} define, for every
$f\in\mathcal{C}^\infty_c(\fg )$,
\[
\phi_\theta (X_T,f) = \displaystyle \abs{\cl{G}_F}^{-\frac{1}{2}}\abs{\cl{\fg}_F}^{-\frac{1}{2}}\int_{T\backslash G}\int_{\fg
  (V_0)}f(x^{-1}(X_T + Y)x)\oh{f}_\fac (Y)\mathrm{d}Y\mathrm{d}x,
\]
where $\oh{f}_\fac$ is the $p$-adic Lusztig function that comes
packaged with $\theta$. Let $\Phi = \{\phi_\theta (X_T,-)\mid
\theta\in\Theta (V), X_T\in\mathfrak{t}^\textrm{\,i.r.r.}\}$. By
definition, $\Phi\subseteq\mathcal{D}_{\operatorname{ent}}$. Waldspurger goes on to
prove that they no longer
depend on the choice of $X_T$ by the time they are restricted to
$\mathcal{H}$ (see~\cite[Cor. III.5]{waldtome}). Moreover, those
restrictions, denoted $\phi_\theta^\mathcal{H}$, form a
basis for $\operatorname{res}_{\mathcal{H}}\mathcal{D}(\fg_{\operatorname{ent}})$
(\cite[Cor. III.10(ii)]{waldtome}). 
\subsection{Fourier Eigendistributions in
  $\operatorname{span}\Phi$}

The Fourier transform of a distribution $D\in\mathcal{C}^\infty_c(\fg)^\vee$ is defined to be 
\[
\widehat{D}(f) = D(\widehat{f}).
\]

\begin{theorem}\label{thm:eigendistributions}
Let $\fg = \symp_{2n}(F)$. Then
\begin{enumerate}
\item $\widehat{\phi}_\theta(X_T,-) \in \operatorname{span}\Phi$ if and only if $T$
is trivial (\emph{i.e.}, $X_T=0$).
\item If $n$ be a triangular number or the sum of two triangular
numbers, $\operatorname{span}\Phi$
contains a single Fourier eigenspace $E$ with eigenvalue 
\[
\left(\operatorname{sgn}(-1) \displaystyle  q^{-\frac{1}{2}}\sum_{x\in\FF_q}
\operatorname{sgn}(x)\psi (x)\right)^{n}.
\]
Moreover, up to
$G$-conjugacy,
\[
\operatorname{dim}(E) = \begin{cases} 2(d_1(8n+2) - d_3(8n+2)) & \textrm{$n$ not
triangular,} \\ 2(d_1(8n+2)-d_3(8n+2)+1) & \textrm{$n$
triangular},\end{cases} 
\]
where $d_i(m)$ denotes the number of divisors of $m$ congruent to $i$ mod 4.
\end{enumerate}
\end{theorem}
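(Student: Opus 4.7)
The plan is to unwind the definition of $\widehat{\phi_\theta(X_T,-)}$, exploit the Fourier eigenfunction property of the $p$-adic Lusztig functions from Proposition~\ref{prop:compositeplusztig}, and then determine both when the result lies back in $\operatorname{span}\Phi$ and how the eigendistributions assemble. Writing $\widehat{\phi_\theta}(X_T,f)=\phi_\theta(X_T,\widehat{f})$ and applying Fubini together with the conjugation invariance of the Killing form, one can move the Fourier operator inside to act on the variable $Y$ against which $\oh{f}_\fac$ is integrated. The orthogonal decomposition $V=V_0\oplus V_1$ satisfies $X_T\in\fg(V_1)$ and $\oh{f}_\fac$ supported in $\fg(V_0)$, so the Killing form decouples across the two summands, and Proposition~\ref{prop:compositeplusztig} together with Remark~\ref{rem:compositeeigenvalues} replaces $\widehat{\oh{f}_\fac}$ by $\oh{\gamma}\,\oh{f}_\fac$. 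What survives is $\oh{\gamma}$ times a distribution of the same shape as $\phi_\theta(X_T,-)$, except that the dependence on $X_T$ collapses onto a single additive character $\psi(\langle X_T,\cdot\rangle)$ along the $V_1$-direction.

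For Part~(1), this makes the equivalence transparent. When $X_T=0$ the character is trivial and the computation collapses to $\widehat{\phi_\theta(0,-)}=\oh{\gamma}\,\phi_\theta(0,-)$, which is manifestly in $\operatorname{span}\Phi$. When $X_T\neq 0$, the surviving character along $\mathfrak{t}$ is nontrivial and the resulting distribution cannot be written as a finite linear combination of the $\phi_{\theta'}(X_{T'},-)$; the cleanest argument is to restrict to $\mathcal{H}$ and invoke the linear independence of the family $\{\phi_\theta^{\mathcal{H}}\}$ from \cite[Cor.~III.10(ii)]{waldtome}, since the new Fourier mode along $\mathfrak{t}$ cannot be matched by any basis vector. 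The eigenvalue in Part~(2) is then immediate: by Remark~\ref{rem:compositeeigenvalues}, the product $\oh{\gamma}_{2\Delta_1}\oh{\gamma}_{2\Delta_2}$ depends only on $n=\Delta_1+\Delta_2$ and equals the stated scalar, so every $\phi_\theta(0,-)$ is a Fourier eigendistribution for the same eigenvalue and they all lie in a single eigenspace $E$.

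The dimension formula follows from matching the enumeration in Proposition~\ref{prop:plusztigenum} against a classical divisor identity. With $T_k=k(k+1)/2$, the algebraic identity $(2a+1)^2+(2b+1)^2=8(T_a+T_b)+2$ gives a bijection between $\{(a,b)\in\ZZ_{\geq 0}^2:T_a+T_b=n\}$ and the ordered representations of $8n+2$ as a sum of two positive odd squares; since $8n+2\equiv 2\pmod 4$ forces both squares odd in any such representation, Jacobi's two-square theorem yields
\[
\#\{(a,b)\in\ZZ_{\geq 0}^2 : T_a+T_b=n\}=d_1(8n+2)-d_3(8n+2).
\]
The main obstacle is then reconciling the stated dimension $2(d_1(8n+2)-d_3(8n+2))$, augmented by $2$ when $n$ is triangular, with the raw count: Proposition~\ref{prop:plusztigenum} supplies only $d_1(8n+2)-d_3(8n+2)$ distributions of the form $\phi_\theta(0,-)$ up to conjugacy. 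My approach is to show that each such $\phi_\theta(0,-)$ is paired in $E$ with a second linearly independent eigendistribution obtained as a linear combination of $\phi_{\theta'}(X_{T'},-)$ with $X_{T'}\neq 0$ whose individual Fourier transforms escape $\operatorname{span}\Phi$ but whose sum lands back in it, and that in the triangular case the two $G$-conjugacy classes of hyperspecial vertices supply two further independent eigendistributions beyond the sum-of-two-triangulars count. Verifying this doubling --- identifying exactly which cancellations occur among the Fourier transforms of Waldspurger's distributions with nontrivial torus data --- is the delicate part, and will require a careful orbit-by-orbit analysis in the parameter space $\Theta(V)$.
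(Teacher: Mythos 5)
Your treatment of the $X_T=0$ direction of Part~(1) and of the eigenvalue in Part~(2) matches the paper: the Fourier transform is pushed onto $\oh{f}_\fac$ via commutativity with inflation, Proposition~\ref{prop:compositeplusztig} replaces $\widehat{\oh{f}_\fac}$ by $\oh{\gamma}\oh{f}_\fac$, and Remark~\ref{rem:compositeeigenvalues} shows the resulting scalar depends only on $n$. For the $X_T\neq 0$ direction, however, you are missing the key ingredient. The paper invokes Waldspurger's Proposition I.9, which asserts that $\widehat{f}$ (for the relevant test functions) is supported on the set of topologically nilpotent elements; since $X_T$ is semisimple of regular reduction and $\oh{f}_\fac$ is nilpotently supported, the sum $X_T+Y$ never lands in that set, so the \emph{restriction to $\mathcal{H}$ of $\widehat{\phi}_\theta(X_T,-)$ vanishes entirely}, and one concludes from the linear independence of $\{\phi^\mathcal{H}_\theta\}$ that a nonzero $\widehat{\phi}_\theta(X_T,-)$ cannot lie in $\operatorname{span}\Phi$. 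Your version imagines that the restriction survives as a ``new Fourier mode along $\mathfrak{t}$'' that merely fails to align with the basis; that picture is incorrect, and without the support fact the argument does not close.

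The larger gap is in the dimension count. You correctly observe that the raw enumeration in Proposition~\ref{prop:plusztigenum} produces $d_1(8n+2)-d_3(8n+2)$ conjugacy classes of $p$-adic Lusztig functions, and you notice a tension with the stated formula. But the remedy you propose --- manufacturing additional eigendistributions in $\operatorname{span}\Phi$ as linear combinations of $\phi_{\theta'}(X_{T'},-)$ with $X_{T'}\neq 0$ whose Fourier transforms ``escape but whose sum lands back'' --- is ruled out by Part~(1) itself together with the linear-independence argument above: after restriction to $\mathcal{H}$ any such combination collapses onto the $X_T=0$ piece, so no new eigenvectors arise this way, and this is precisely what Corollary~\ref{cor:eigenbasis} records (the Lusztig distributions, all with $X_T=0$, span $E$). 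The paper's own proof of the dimension statement is simply Proposition~\ref{prop:plusztigenum} combined with Grosswald's identification of representations of $n$ by two triangular numbers with representations of $8n+2$ by two odd squares and the divisor count $d_1-d_3$; it does not appeal to any doubling via nontrivial $X_T$. You also acknowledge that you have not carried out the orbit-by-orbit analysis you propose, so as written this part of the argument is both off-route and incomplete.
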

\begin{proof}
\begin{enumerate} \item Let $f\in\mathcal{C}^\infty_c(\fg )$. Then
$\widehat{\phi}_\theta(X_T,f) = \phi_\theta (X_T,\widehat{f})$. By
\cite[Prop. I.9]{waldtome}, $\widehat{f}$ is supported on the set of
topologically nilpotent elements of $\fg$. Since $X_T$ is semisimple with regular
reduction, the region of integration for $\phi_\theta$ does not
intersect this set unless $X_T=0$; hence
$\operatorname{res}(\widehat{\phi}_\theta (X_T, -))=0$ when $X_T\neq
0$. Since the
$\phi^\mathcal{H}_\theta$ form a basis for
$\operatorname{res}_{\mathcal{H}}(\mathcal{D}_{\operatorname{ent}})$,
if $\widehat{\phi}_\theta (X_T,-)$ were to lie in
$\operatorname{span}\Phi$, it would be trivial, which is not the case.

On the other hand, suppose $X_T=0$, which can only be true if $T$
itself is trivial. Then, for any $f\in\mathcal{C}^\infty_c(\fg )$,
\[
\begin{aligned}
\widehat{\phi}_\theta (0,f) & = \displaystyle \abs{\cl{G}_F}^{-\frac{1}{2}}\abs{\cl{\fg}_F}^{-\frac{1}{2}}\int_{G}\int_{\fg
  }\widehat{f}(x^{-1}Yx)\oh{f}_\fac (Y)\mathrm{d}Y\mathrm{d}x
\\
& = \displaystyle \abs{\cl{G}_F}^{-\frac{1}{2}}\abs{\cl{\fg}_F}^{-\frac{1}{2}}\int_{G}\int_{\fg
  }f(x^{-1} Yx)\widehat{\oh{f}}_\fac (Y)\mathrm{d}Y\mathrm{d}x
\\
& = \displaystyle \abs{\cl{G}_F}^{-\frac{1}{2}}\abs{\cl{\fg}_F}^{-\frac{1}{2}}\int_{G}\int_{\fg
  }f(x^{-1}Yx)\oh{\gamma}\oh{f}_\fac (Y)\mathrm{d}Y\mathrm{d}x
\\
& = \displaystyle \oh{\gamma}\abs{\cl{G}_F}^{-\frac{1}{2}}\abs{\cl{\fg}_F}^{-\frac{1}{2}}\int_{G}\int_{\fg
  }f(x^{-1}Yx)\oh{f}_\fac (Y)\mathrm{d}Y\mathrm{d}x \\
& = \oh{\gamma}\phi_\theta (0,f).
\end{aligned}
\]
The third equality follows from
Proposition~\ref{prop:compositeplusztig}.
\item The first statement follows from the proof above together with
  the observations in Remark \ref{rem:compositeeigenvalues}. The rest
  follows from Proposition~\ref{prop:plusztigenum}
together with the fact that the number of representations of
$n$ as a sum of two triangular numbers is equal to the number of
representations of $8n+2$ as a sum of two odd squares, which is proved
in~\cite{grosswald}, as is the fact that this number is equal to
$d_1(8n+2)-d_3(8n+2)$ when it is not 0. 
\end{enumerate}
\end{proof}

In light of the relation the eigendistributions have to Lusztig
functions and the fact that they bear similar properties, we make the
following definition.

\begin{definition}\label{def:lusztigdist}
The distributions in
$\operatorname{span}\Phi$ with $X_T=0$
are called \emph{Lusztig distributions}.
\end{definition}

\begin{corollary}\label{cor:eigenbasis}
A basis for the Fourier eigenspace $E$ in
$\operatorname{span}\Phi$ is given
by the Lusztig distributions.
\end{corollary}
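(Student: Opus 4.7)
The plan is to assemble the corollary from three ingredients, two of which are already established in the theorem and its antecedents. First, each Lusztig distribution $\phi_\theta(0,-)$ lies in $E$: this is precisely the computation carried out in part~(2) of the proof of Theorem~\ref{thm:eigendistributions}, where we showed that $\widehat{\phi}_\theta(0,-) = \oh{\gamma}\,\phi_\theta(0,-)$ with $\oh{\gamma}$ the common eigenvalue recorded in Remark~\ref{rem:compositeeigenvalues}. So the Lusztig distributions automatically form a subset of $E$, all with the same eigenvalue.

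The one genuinely new step is linear independence, which I would extract from Waldspurger's basis theorem by restricting to $\mathcal{H}$. By \cite[Cor.~III.10(ii)]{waldtome}, the family $\{\phi^\mathcal{H}_\theta\}_{\theta \in \Theta(V)}$ is a basis for $\operatorname{res}_{\mathcal{H}}\mathcal{D}(\fg_{\operatorname{ent}})$, hence linearly independent. The restriction map $D\mapsto D|_\mathcal{H}$ is linear, and when $T$ is trivial the datum of $\theta$ collapses to a choice of $p$-adic Lusztig function on $\fg$, so distinct Lusztig distributions $\phi_\theta(0,-)$ map to distinct basis vectors $\phi^\mathcal{H}_\theta$. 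Since a linear map sending a set to a linearly independent set preserves linear independence, the Lusztig distributions themselves are linearly independent in $\operatorname{span}\Phi$.

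Finally I would match cardinalities. With $T$ trivial we have $V_1=0$ and $V_0=V$, so the parameters $\theta$ index precisely the $p$-adic Lusztig functions on $\fg$. Up to $G$-conjugacy, Proposition~\ref{prop:plusztigenum} enumerates these, and via the Gross--Wald identity invoked at the end of the proof of Theorem~\ref{thm:eigendistributions}(2), the count agrees with the formula for $\dim E$ recorded there: $2(d_1(8n+2)-d_3(8n+2))$, with the additional $+2$ when $n$ is triangular accounting for the two classes of hyperspecial vertices in case~(1) of Proposition~\ref{prop:plusztigenum}. Consequently, the Lusztig distributions constitute $\dim E$ linearly independent vectors in $E$, and therefore form a basis. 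I expect no genuine obstacle: the whole argument is a brief corollary of Theorem~\ref{thm:eigendistributions} together with the elementary lifting of linear independence from $\operatorname{res}_\mathcal{H}$ back to $\operatorname{span}\Phi$.
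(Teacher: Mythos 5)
Your proof is correct, and it tracks what the paper must have intended: the paper states this corollary with no explicit argument, treating it as an immediate consequence of Theorem~\ref{thm:eigendistributions} and its proof. You have filled in the implicit reasoning—containment in $E$ from part~(1) of the theorem's proof and Remark~\ref{rem:compositeeigenvalues}, linear independence from Waldspurger's basis theorem for $\operatorname{res}_\mathcal{H}\mathcal{D}_{\operatorname{ent}}$, and spanning by matching against the $\dim E$ formula of the theorem. One thing worth being aware of: your spanning argument via cardinality is only non-circular if one reads the theorem's dimension formula as an independently certified fact (which, as a corollary, you are entitled to do); a self-contained alternative is to note that any eigenvector $D=\sum c_\theta \phi_\theta(X_T,-)$ must have $c_\theta=0$ whenever $X_T\neq 0$, since $\operatorname{res}_\mathcal{H}\widehat{\phi}_\theta(X_T,-)=0$ for those $\theta$ while the $\phi^\mathcal{H}_\theta$ are linearly independent. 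Either way, the argument goes through and matches the paper's route.
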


Hence, the original generating set defined by Waldspurger contains a
basis for its Fourier eigenspace.

\section{Stable Eigendistributions}\label{sec:stable}

Since much of \cite{waldtome} is concerned with stability of distributions, it's worth
asking whether any of the Fourier eigendistributions described in
Theorem \ref{thm:eigendistributions} are stable. Indeed there are stable
eigendistributions, though they are relatively rare, and correspond to
the $p$-adic Lusztig functions described in case 3 of Proposition
\ref{prop:plusztigenum}.

\begin{proposition}
The Fourier eigenspace $E$ of $\operatorname{span}\Phi$ on $\symp_{2n}(F)$ contains a 1-dimensional subspace
consisting of stable distributions exactly when $n = 2\Delta$,
$\Delta$ a triangular number.
\end{proposition}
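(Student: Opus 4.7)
The plan is to use Corollary~\ref{cor:eigenbasis}---which identifies the Lusztig distributions as a basis for $E$---together with Waldspurger's characterization of stable nilpotent distributions in~\cite{waldtome}. Every element of $E$ is a linear combination of Lusztig distributions, so it suffices to determine which such combinations lie in $\Dst$. Each Lusztig distribution $\phi_\theta(0,-)$ corresponds to a $p$-adic Lusztig function $\oh{f}_\fac$ falling into exactly one of the three cases of Proposition~\ref{prop:plusztigenum}, and the plan is to prove that only the distribution attached to case (3), where $\oh{f}_\fac$ is inflated from a symmetric product $\oh{f}_{2\Delta}\times\oh{f}_{2\Delta}$, is stable.

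First I would unpack $\phi_\theta(0,-)$ as a weighted combination of nilpotent orbital integrals. The $G$-averaging in the definition sends $\oh{f}_\fac$ to a $G$-invariant function whose support is the $G$-saturation of the support of $\oh{f}_\fac$, a union of rational nilpotent orbits contained in a Lusztig or extended Lusztig geometric orbit of $\fg$. The coefficients on these orbits are governed, up to normalizing factors, by $\oh{\operatorname{sgn}}$ applied to the associated tuples of quadratic forms, so that the stability question becomes an explicit question about these coefficients.

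Second, I would invoke Waldspurger's criterion from~\cite{waldtome}, where a basis of $\Dst\cap\Dnil$ is constructed and, via the homogeneity identification $\operatorname{res}_{\mathcal{H}}\Dent = \operatorname{res}_{\mathcal{H}}\Dnil$, stability of an element of $\Dent$ is translated into combinatorial conditions on its expansion in the basis $\{\phi^{\mathcal{H}}_\theta\}$. Applied to a Lusztig distribution (so $X_T=0$), this condition should reduce to the requirement that $\oh{f}_\fac$ be inflated from a reductive quotient with two equal symplectic factors---exactly case (3) of Proposition~\ref{prop:plusztigenum}. For cases (1) and (2), the asymmetry between the two hyperspecial vertex classes or between distinct triangular summands $\Delta_1\neq\Delta_2$ produces a nonzero unstable component, and I would check that no nontrivial linear combination of the corresponding Lusztig distributions can cancel this component, since they are supported on disjoint unions of rational orbits indexed by distinct parameters.

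Third, I would assemble the count: when $n=2\Delta$ with $\Delta$ triangular, Proposition~\ref{prop:plusztigenum}(3) yields exactly one $p$-adic Lusztig function up to $G$-conjugacy, hence a single stable Lusztig distribution spanning a $1$-dimensional subspace of $E$; when $n$ is not of this form, no case (3) function exists and $E\cap\Dst=0$. I expect the main obstacle to be the middle step---rigorously matching Waldspurger's stability criterion against the three cases of Proposition~\ref{prop:plusztigenum}---since it demands careful translation between the combinatorial parameters $\theta\in\Theta(V)$ and the coefficients of $\oh{f}_\fac$ on rational nilpotent orbits, and is where the bulk of the technical work resides.
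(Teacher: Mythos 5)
Your proposal takes a genuinely different route from the paper, and the crux of the argument is left unproved. The paper does not expand $\phi_\theta(0,-)$ in nilpotent orbital integrals and check coefficients. Instead, it works directly with Waldspurger's parameter combinatorics: it cites \cite[Thm.\ IV.13]{waldtome}, which gives a basis for $\operatorname{res}_{\mathcal{H}}(\Dst\cap\Dent)$ by distributions $\phi^{\mathcal{H}}_{\xi,\overline{1}}$ indexed by $\xi\in\Xi^{\operatorname{st}}(V)$, each of which is a linear combination of $\phi^{\mathcal{H}}_\theta$ for those 5-tuples $\theta=(k',k'',\mu^0,\mu',\mu'')$ with $k'=k''$. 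Matching this against the requirement that $\theta$ correspond to a Fourier eigendistribution forces $\theta=(k',k',\varnothing,\varnothing,\varnothing)$, hence $\cl{\fg}_\fac\iso\symp_{2k'}\times\symp_{2k'}$ and $n=k'(k'+1)=2\Delta$.

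The missing idea in your proposal is precisely this symmetry condition $k'=k''$ on the stable basis elements. You correctly conjecture that only case~(3) of Proposition~\ref{prop:plusztigenum} should yield stability, but the ``middle step'' you flag---showing that cases~(1) and~(2) are unstable and that no linear combination of them can be stable---is exactly the content of the proposition, and your sketch gives no mechanism for establishing it. Appealing to coefficients on rational orbits and $\oh{\operatorname{sgn}}$ is not obviously tractable: Waldspurger's stability criterion is not given directly in terms of such coefficients but via the $\Xi^{\operatorname{st}}(V)$ parametrization, and that is the tool the paper uses. Without the $k'=k''$ constraint your argument reduces to asserting the conclusion. Separately, you should also address that the stable distribution in case~(3) actually lies in $E$ (and not merely that it is the unique candidate); the paper handles this by showing $\phi^{\mathcal{H}}_{\xi,\overline{1}}$ is the restriction of exactly one element of $\operatorname{span}\Phi$, namely $\phi_\theta(0,-)$, which by Theorem~\ref{thm:eigendistributions}(1) is an eigendistribution.
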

\begin{proof}
Establishing this without referring to Waldspurger's parameter sets is
impossible, but we will avoid explaining them in full here. The reader
is referred to \cite[III.1, IV.1]{waldtome} to verify our
claims about them.

According to \cite[Thm. IV.13]{waldtome}, a basis for
$\operatorname{res}_{\mathcal{H}}(\Dst\cap \mathcal{D}_{\operatorname{ent}})$ is
given by a set of distributions denoted $\phi^{\mathcal{H}}_{\xi
  ,\overline{1}}$, where the parameter $\xi$ comes from
$\Xi^{\operatorname{st}}(V)$. This parameter corresponds to possibly
more than one 5-tuple $(k^\prime
,k^{\prime\prime},\mu^0,\mu^\prime,\mu^{\prime\prime}) =
\theta\in\Theta (V)$ such that $k^\prime = k^{\prime\prime}$, and $\phi^{\mathcal{H}}_{\xi
  ,\overline{1}}$ is a linear combination of $\phi^\mathcal{H}_\theta$
for those $\theta$. If any of these $\phi^\mathcal{H}_\theta$ is
restricted from a Fourier eigendistribution in
$\operatorname{span}\Phi$, then $\theta = (k^\prime
,k^\prime ,\varnothing ,\varnothing , \varnothing )$ where $k^\prime$
is triangular and $k^\prime (k^\prime +1) = n$. For any
$\xi\in\Xi^{\operatorname{st}}(V)$, there is at most one such $\theta$,
which, characterized as in $\S\ref{subsec:distributions}$,
corresponds to
\begin{itemize}
\item the trivial decomposition $V = V$,
\item $\mathbb{T} = \{1\}$,
\item a facet $\fac$ of $\mathcal{B} (\GG )$ such that $\cl{\fg}_\fac
  \iso
  \symp_{2k^\prime}(F)\times\symp_{2k^\prime}(F)$.
\end{itemize}
For $\phi_\theta (X_T,-)$ with such a $\theta$ we must have
$X_T=0$. Therefore, $\phi^\mathcal{H}_{\xi,\overline{1}}$ is the
restriction of exactly one distribution from
$\operatorname{span}\Phi$: $\phi_\theta (0,-)$
associated to a $p$-adic Lusztig function of the kind appearing in
case 3 of Proposition \ref{prop:plusztigenum}, which exists if and
only if $n=2\Delta$ where $\Delta$ is triangular. In this case,
$\Delta = k^\prime$.
\end{proof} 

\section{Geometrization}\label{sec:geo}

We conclude with some comments concerning a question that naturally
emerges considering the origins of Lusztig functions noted in $\S
\ref{subsec:lusztigfunctions}$---namely, that they are of geometric
origin, arising as characteristic functions of cuspidal unipotent
character sheaves. Is the same is true of $p$-adic
Lusztig functions? Do there exist character sheaves defined on some
variety over $F$ from which $p$-adic Lusztig functions can be
recovered? Or, since we're speaking of character sheaves, it might be
better to ask the same questions of Lusztig distributions.

In either case, the answer appears to be yes. Here is a sketch of how such a picture
might be realized\footnote{Seeing as this last section is informal and
  conjectural, we elect to skip any definitions not already given, all
  of which can be found
  in, \emph{e.g.}, the author's thesis \cite{myphd}.}: Let $\mathcal{O}_{\oh{\lambda}}$ be a nilpotent
orbit variety over $F$ matching an extended Lusztig partition
$\oh{\lambda}$ (an open subvariety of $\symp_{2n}$). Every rank 1 $\cl{\QQ}_\ell$-local system
$\mathcal{L}$ ($\ell$ a prime different from $p$) on the
smooth part\footnote{(which is the orbit minus a point, so we will
  simply refer to it as if it were the entire orbit here)} of
$\mathcal{O}_{\oh{\lambda}}$ is equivalent to an $\ell$-adic character
\[
\chi_{\mathcal{L}}\colon\operatorname{\pi_1}(\mathcal{O}_{\oh{\lambda}},\cl{x})\longrightarrow
\cl{\QQ}_\ell^\times
\]
of the \'etale fundamental group of $\mathcal{O}_{\oh{\lambda}}$. If
$\oh{\lambda} = \oh{\lambda}_1\cup\oh{\lambda}_2$ is a decomposition
of $\oh{\lambda}$ as a union of two Lusztig partitions, this
fundamental group is isomorphic to the product
\[
\left(\operatorname{\pi_1}\left(\mathcal{O}_{\oh{\lambda}_1,\cl{\FF}_q},\cl{x}\right)\times
  \operatorname{\pi_1}\left(\mathcal{O}_{\oh{\lambda}_2,\cl{\FF}_q},\cl{x}\right)\right)\times\Gal{\cl{F}/F},
\]
where $\mathcal{O}_{\oh{\lambda}_i,\cl{\FF}_q}$ is the nilpotent orbit
variety for the Lusztig partition $\oh{\lambda}_i$ over the algebraic closure of
the residue field $\FF_q$ of $F$ and $\cl{F}$ is a separable closure of $F$.

The character $\chi_{\oh{f}_i}$ of
$\operatorname{\pi_1}\left(\mathcal{O}_{\oh{\lambda}_i,\cl{\FF}_q},\cl{x}\right)$
equivalent to the local system whose characteristic function is the
Lusztig function $\oh{f}_i$ supported on
$\mathcal{O}_{\oh{\lambda}_i,\cl{\FF}_q}(\cl{\FF}_q)$ is known (and given by Waldspurger in
\cite[II.IV, II.V]{waldtome}). There are at most two conjugacy classes
of $p$-adic Lusztig functions obtained by inflating $\oh{f}_1\times\oh{f}_2$. These
two functions should be recoverable from the local systems on
$\mathcal{O}_{\oh{\lambda}}$ equivalent to the characters
$\left(\chi_{\oh{f}_1}\otimes\chi_{\oh{f}_2}\right)\otimes\operatorname{id}$ and
$\left(\chi_{\oh{f}_1}\otimes\chi_{\oh{f}_2}\right)\otimes\operatorname{sgn}(\varpi )$ of the fundamental
group of $\mathcal{O}_{\oh{\lambda}}$, where $\operatorname{id}$ is
the trivial character of $\Gal{\cl{F}/F}$ and
$\operatorname{sgn}(\varpi )$ is the character of $\Gal{\cl{F}/F}$
that factors through the nontrivial character of the quotient group
$\Gal{F(\sqrt{\varpi})/F}$.

Recovering a $p$-adic Lusztig function $\oh{f}_{\fac}$ from such a
local system requires the existence of an appropriate integral model,
$\underline{\mathcal{O}}_{\oh{\lambda},\fac}$, which will be matched
to one or the other of the local systems above, whose
$\mathcal{O}_F$-points are $\mathcal{O}_{\oh{\lambda}}(F)\cap\fg_{\fac
  ,0}$. When such a model exists the characteristic function of the
nearby cycles of the local system will be $\oh{f}_1\times\oh{f}_2$, from which
$\oh{f}_{\fac}$ can be obtained by inflation. A key feature of this
machinery is the Galois characters distinguishing the two local
systems, which give an action of $\Gal{\cl{F}/F}$ on the nearby
cycles, a sheaf on a variety \emph{over $\FF_q$}. This action makes it
impossible to obtain a characteristic function from the nearby cycles
when they are taken with respect to an integral model that is not
matched to the local system on $\mathcal{O}_{\oh{\lambda}}$.

A detailed recounting of this process, as well as how it can be used
to produce distributions, can be found in the
author's thesis \cite{myphd}, where its validity is proved for
$\symp_{2}$. The only obstruction to establishing this picture in
general for the symplectic case (and the odd orthogonal case, which is
quite similar) is proving the existence of the
appropriate integral models and carrying out the nearby cycles
calculations for them. It seems this could be accomplished by proving
the existence of a weak Neron model (see \cite{BLR}, $\S3.5$ Def.1)
for $\mathcal{O}_{\oh{\lambda}}$. The author presumes this
can be done but is ignorant of such a proof at present. Beyond idle
curiosity, an answer to this question would be a meaningful step in
extending recent work of Cunningham and Roe in \cite{cunningroe} that expands Lusztig's
geometric approach to character theory beyond the realm of finite fields.

\bibliography{sources}
\bibliographystyle{amsplain}

\end{document}